\documentclass[11pt]{amsart}

\usepackage[T1]{fontenc}
\usepackage{lmodern}
\usepackage{microtype}
\usepackage{amsmath,amsfonts,amssymb,amsthm,mathtools}
\usepackage{geometry}
\usepackage{setspace}
\usepackage{enumitem}
\usepackage{hyperref}

\hypersetup{
  colorlinks=true,
  linkcolor=blue,
  citecolor=blue,
  urlcolor=blue
}

\newcommand{\Sph}{\mathbb S}
\newcommand{\CP}{\mathbb{CP}}

\newcommand{\Area}{\operatorname{Area}}
\newcommand{\Ric}{\operatorname{Ric}}
\newcommand{\Conf}{\operatorname{Conf}}

\theoremstyle{plain}
\newtheorem{theorem}{Theorem}[section]
\newtheorem{corollary}[theorem]{Corollary}
\newtheorem{proposition}[theorem]{Proposition}
\newtheorem{lemma}[theorem]{Lemma}

\newtheorem{mainthm}{Theorem}

\theoremstyle{remark}
\newtheorem{remark}[theorem]{Remark}

\title[Second Jacobi Eigenvalues]{Second Jacobi Eigenvalues and Spectral Rigidity
for Surfaces in Berger Spheres}

\author{M\'arcio Batista}
\author{Abra\~ao Mendes}
\address{CPMAT -- Instituto de Matemática, Universidade Federal de Alagoas,
Macei\'o, AL, 57072-970, Brazil}
\email{mhbs@mat.ufal.br}
\email{abraao.mendes@im.ufal.br}

\subjclass[2020]{Primary 53C42, 58J50; Secondary 35P15, 49Q05}
\keywords{Berger spheres, Jacobi operator, second eigenvalue,
spectral rigidity, Willmore energy, Clifford torus}

\numberwithin{equation}{section}

\usepackage{enumitem}

\usepackage{marginnote}

\begin{document}

\begin{abstract}
In this paper, we establish an upper bound for the second eigenvalue of the scalar Jacobi operator of an arbitrary closed two-sided surface immersed in a contracted Berger sphere.  The estimate involves the total squared mean curvature, the Euler characteristic, and an explicit nonpositive term determined by the angle between the surface normal and the Hopf direction.  The proof combines the realization of a Berger sphere as a geodesic hypersurface of a complex projective plane, the first standard embedding of the latter into a Euclidean sphere, a weighted Hersch--Li--Yau balancing argument, and the conformal invariance of the Willmore functional.  No minimality or constant mean curvature assumption is imposed.  As an application, if $1/3\leq\alpha\leq1$ and the surface has nonpositive Euler characteristic, then its second Jacobi eigenvalue is nonpositive; it is strictly negative for $\alpha>1/3$.  At the critical value $\alpha=1/3$, equality forces the immersed image to be congruent to the minimal Clifford torus; in the embedded category, this yields a complete characterization of the equality case.
\end{abstract}

\maketitle

\section{Introduction}

Let $\varphi:\Sigma^2\to M^3$ be a closed two-sided immersed surface,
with globally defined unit normal $N$.  Its scalar Jacobi operator is
\begin{equation*}\label{eq:intro-jacobi}
 \mathcal J=\Delta+|\sigma|^2+\Ric_M(N,N),
\end{equation*}
where $\Delta=\operatorname{div}\nabla$ and $\sigma$ is the second
fundamental form {of $\varphi$}.  We denote by
\[
 \lambda_1(\mathcal J)<\lambda_2(\mathcal J)\leq\lambda_3(\mathcal J)\leq\cdots \nearrow \infty
\]
the eigenvalues of $-\mathcal J$, repeated according to multiplicity.  The low
spectrum of $J$ contains information which is finer than the sign of the
second variation: it relates stability and Morse index to the intrinsic
topology and the extrinsic geometry of the immersion.

Sharp estimates for the second eigenvalue of Schr\"odinger and Jacobi
operators form a classical part of spectral submanifold geometry.  In
Euclidean space, Harrell and Loss proved that the second Jacobi eigenvalue
of a closed orientable hypersurface is nonpositive and characterized the
equality case by round spheres \cite{HarrellLoss1998}.  El Soufi and Ilias
developed a general conformal method for Schr\"odinger operators on
submanifolds of space forms \cite{ElSoufiIlias2000}.  Its essential
ingredients are coordinate test functions centered with respect to a first
eigenfunction and an estimate of their Dirichlet energy by an extrinsic
conformal invariant.  Related rigidity results for the Jacobi operator were
obtained by {the second-named author} \cite{Mendes2019}.  For surfaces, this circle of ideas has
recently produced sharp estimates in terms of Willmore energy and topology,
including spectral characterizations of the Clifford torus and other
extremal minimal surfaces \cite{BatistaCavalcanteMendesNunes2025}.

There is a basic difference between the first and second Jacobi 
eigenvalues. A first eigenfunction has {a fixed} sign{,} and coordinate functions
of a spherical immersion are not automatically orthogonal to it. To 
estimate the second eigenvalue, the auxiliary immersion must first be 
composed with a conformal transformation whose coordinates have zero 
weighted mean, the weight being the positive first eigenfunction. This 
centering principle goes back to Hersch \cite{Hersch1970} and Li--Yau 
\cite{LiYau1982}; in the Schr\"odinger setting{,} it is a central 
ingredient of {the work of} El Soufi and Ilias \cite{ElSoufiIlias2000}. For surfaces{,} it 
combines particularly well with the conformal invariance of {both} the Dirichlet 
energy and the Euclidean Willmore functional.

Berger spheres provide a natural non-space-form setting in which this
program can be tested.  They constitute a one-parameter family of
homogeneous metrics on $\Sph^3$, obtained by changing the metric along the
fibers of the Hopf fibration.  Their constant mean curvature surfaces have
been studied from several complementary viewpoints.  Rotational CMC
surfaces and compact minimal examples were investigated by Torralbo
\cite{Torralbo2010,Torralbo2012}.  Torralbo and Urbano obtained curvature
restrictions for compact surfaces in homogeneous three-manifolds
\cite{TorralboUrbano2010Gauss} and, most importantly for the present work,
classified compact orientable stable CMC surfaces in a substantial range of
Berger parameters \cite{TorralboUrbano2012}.  In particular, when
$1/3\leq\alpha<1$, the stable CMC examples are CMC spheres and, at the
endpoint $\alpha=1/3$, the minimal Clifford torus.  Their result also leads
to the solution of the isoperimetric problem in that range.  Subsequent work
has further analyzed the index of compact minimal submanifolds of Berger
spheres \cite{TorralboUrbano2022}.

More precisely, Torralbo--Urbano prove that an orientable compact stable
CMC surface in $\Sph^3_\alpha$, $1/3\leq\alpha<1$, is either a CMC sphere
or, at $\alpha=1/3$, the minimal Clifford torus.  Their notion is
volume-preserving stability: the Jacobi quadratic form is tested on
functions of zero integral, and the CMC hypothesis is essential for this
variational interpretation.  The admissible space for $\lambda_2(\mathcal J)$ is
instead the $L^2$-orthogonal complement of a first eigenfunction of $-\mathcal J$.
These two codimension-one spaces need not coincide.  Consequently, the
estimate below is not a reformulation of their stability theorem.

The extrinsic construction used by Torralbo and Urbano is crucial here.  A
contracted Berger sphere $\Sph^3_\alpha$, $0<\alpha<1$, is realized as a
geodesic hypersurface of $\CP^2(1-\alpha)$, and the latter admits its first
standard isometric embedding into $\mathbb R^8$.  Torralbo and Urbano used
this composition, together with stability and the CMC condition, to control
the topology of stable surfaces.  Our purpose is different: we use the same
extrinsic geometry to construct test functions for the second eigenvalue of
$-\mathcal J$.  The first standard embedding already takes values in a round
$7$-sphere.  It can therefore be balanced directly by a conformal
transformation of $\Sph^7$, with weight given by a first eigenfunction of
$-\mathcal J$.  This removes the volume constraint and makes the argument applicable
to every closed two-sided immersion, without assuming that $H$ is constant.

Our main result is the following.  We write $\nu=\langle N,\xi\rangle$, where
$\xi$ is the unit Hopf vector field.

\begin{mainthm}\label{thm:intro-main}
Let \(\varphi:\Sigma^2\to\Sph^3_\alpha\), \(0<\alpha<1\), be a closed two-sided immersion.  Then
\begin{align}
 \lambda_2(\mathcal J)\leq{}&
 \frac{(1+\alpha)(1-3\alpha)}{2\alpha}
 -\frac{2}{\Area(\Sigma)}\int_\Sigma H^2\mskip1mu dA
 +\frac{4\pi\chi(\Sigma)}{\Area(\Sigma)}\notag\\
 &-\frac{(1-\alpha)^2}{2\alpha\Area(\Sigma)}
 \int_\Sigma(2-\nu^2)\nu^2\mskip1mu dA.                 \label{eq:intro-main}
\end{align}
For $\alpha=1$, the limiting estimate is
\begin{equation*}\label{eq:intro-round}
 \lambda_2(\mathcal J)\leq
 -2-\frac{2}{\Area(\Sigma)}\int_\Sigma H^2\mskip1mu dA
 +\frac{4\pi\chi(\Sigma)}{\Area(\Sigma)}.
\end{equation*}
\end{mainthm}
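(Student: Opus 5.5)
We follow the Hersch--Li--Yau / El Soufi--Ilias scheme with the auxiliary map $\Phi=\psi\circ\iota\circ\varphi:\Sigma\to\Sph^7\subset\mathbb R^8$. Here $\iota:\Sph^3_\alpha\to\CP^2(1-\alpha)$ is the geodesic-hypersurface realization and $\psi:\CP^2(1-\alpha)\to\Sph^7(r)$ is the first standard embedding, rescaled into a round sphere. Let $f>0$ be a first eigenfunction of $-\mathcal J$. Since $\Sph^7$ is compact, the standard Brouwer degree argument applied to $g\mapsto\int_\Sigma f^2\,(g\circ\Phi)\,dA$ over the conformal group gives a conformal transformation $g$ of $\Sph^7$ with $\int_\Sigma f\,u_i\,dA=0$ for every coordinate $u_i$ of $g\circ\Phi$. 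Each $u_i$ is then admissible in the min-max characterization of $\lambda_2$. Summing the Rayleigh quotients and using $\sum u_i^2=r^2$ gives
\[
 \lambda_2 \,r^2\Area(\Sigma)\le\int_\Sigma |\nabla (g\circ\Phi)|^2\,dA-\int_\Sigma\bigl(|\sigma|^2+\Ric(N,N)\bigr)r^2\,dA .
\]

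The next step is to bound the energy by a conformal invariant. For a surface in a sphere, $|\nabla(g\circ\Phi)|^2=2$ times the induced area density. Via the Willmore-type inequality $\Area(g\circ\Phi)\le\frac{1}{2}\int_\Sigma(|H_\Phi|^2+\text{const})\,dA$ (suitably normalized), which follows from conformal invariance of $\int(|H|^2-\text{scalar terms})$ and the Gauss equation in $\Sph^7$, we get
\[
 \int|\nabla(g\circ\Phi)|^2\le\int_\Sigma\bigl(|\vec H_\Phi|^2+2/r^2\bigr)r^2\,dA .
\]
We then compute $\vec H_\Phi$ in terms of the data of $\varphi$. It is the sum of the mean curvature $H N$ of $\varphi$, the mean curvature of the Berger sphere in $\CP^2$ (a totally geodesic hypersurface, so this term is zero), and the trace of the second fundamental form of $\psi$ on $T\Sigma$. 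The first standard embedding has parallel second fundamental form with the known formula $|B(X,X)|^2$ and $\langle B(X,X),B(Y,Y)\rangle$ depending on $\langle JX,Y\rangle$. For the orthonormal frame $e_1,e_2$ of $T\Sigma$, the Kähler angle enters, and within the Berger sphere it is expressible through $\nu=\langle N,\xi\rangle$ using $J$, $\xi$ and $N$.

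Finally we combine everything. We rewrite $|\sigma|^2=2H^2\cdot(\text{normalization})+(|\sigma|^2-2H^2)$ and use the Gauss equation of $\Sigma$ in $\Sph^3_\alpha$. Its sectional curvature is $K_{\Sigma}=\mathrm{sec}(T\Sigma)+\det\sigma$, and the Berger sectional curvature of a plane with unit normal $N$ is $4-3\alpha$ (times normalization) adjusted by $\nu^2$ terms. We also use the Berger Ricci formula $\Ric(N,N)=2\alpha\nu^2+(4-2\alpha)(1-\nu^2)$, suitably scaled. Gauss--Bonnet turns $\int K$ into $2\pi\chi(\Sigma)$, producing the $4\pi\chi/\Area$ term, and the $|\sigma|^2$ terms cancel against $-2\int H^2$. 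The remaining constants and $\nu^2,\nu^4$ terms should assemble into $\frac{(1+\alpha)(1-3\alpha)}{2\alpha}-\frac{(1-\alpha)^2}{2\alpha}(2-\nu^2)\nu^2$. The case $\alpha=1$ follows either by the same argument with $\Sph^3\subset\mathbb R^4$ directly or by letting $\alpha\to1$, since $\lambda_2$ depends continuously on the metric.

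I expect the main obstacle to be the exact computation of $|\vec H_\Phi|^2$ for the composed embedding. This requires the normalizations of $\CP^2(1-\alpha)$, the radius $r$, and the formula for $\psi$'s second fundamental form restricted to tangent planes of $\Sigma$ inside the geodesic hypersurface. The quartic term $\nu^4$ arises from exactly this computation. I would first verify the computation on the Clifford torus at $\alpha=1/3$, where all terms must balance to give equality. If the raw Willmore bound produces the wrong constant, I would instead use the conformal invariance of $\int(|\vec H|^2-\text{Gauss-curvature of ambient})$ relative to $\Sph^7$ and track the ambient-curvature term separately.
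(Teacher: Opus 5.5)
Your overall scheme (balanced coordinates of $\Sph^3_\alpha\to\CP^2(1-\alpha)\to\Sph^7(r)\subset\mathbb R^8$, summed Rayleigh quotients, energy controlled by the Euclidean Willmore integral, Gauss equation and Gauss--Bonnet) matches the paper's. However, the key computation rests on a false claim: the Berger sphere is \emph{not} a totally geodesic hypersurface of $\CP^2(1-\alpha)$. It is realized as a geodesic (distance) sphere. In fact $\CP^2$ has no totally geodesic real hypersurfaces, and if $\widehat\sigma=0$ the Gauss equation would give horizontal sectional curvature $4(1-\alpha)$ instead of $4-3\alpha$. The second fundamental form is $-\langle\widehat\sigma(v,w),\eta\rangle=\sqrt\alpha\,\langle v,w\rangle+\frac{\alpha-1}{\sqrt\alpha}\langle v,\xi\rangle\langle w,\xi\rangle$, with principal curvatures $\sqrt\alpha$ horizontally and $(2\alpha-1)/\sqrt\alpha$ along $\xi$. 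Since $\sum_i\langle e_i,\xi\rangle^2=1-\nu^2$, the mean curvature vector of $\Sigma$ in $\CP^2$ has, besides $HN$, the component $-\bigl[\sqrt\alpha+\frac{\alpha-1}{2\sqrt\alpha}(1-\nu^2)\bigr]\eta$. The correct formula is therefore
\[
 |H_x|^2=H^2+\Bigl[\sqrt\alpha+\frac{\alpha-1}{2\sqrt\alpha}(1-\nu^2)\Bigr]^2+(1-\alpha)(3+\nu^2).
\]
This squared $\eta$-term is exactly where the factor $1/\alpha$ in the constant and the quartic $\nu^4$ come from. You attribute the $\nu^4$ term to the first standard embedding, but that piece contributes only $(1-\alpha)(3+\nu^2)$, which is linear in the squared K\"ahler angle $\nu^2$.

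If you drop the $\eta$-term as proposed, the pointwise integrand becomes $-2H^2+2K+2(1-3\alpha)-2(1-\alpha)\nu^2$. This cannot be assembled into the stated right-hand side. The resulting inequality $\lambda_2\le 2(1-3\alpha)+\dots$ is in fact false: the Clifford torus ($H=\nu=K=0$, $\lambda_2(\mathcal J)=(1-3\alpha)/\alpha$ for $\alpha\ge1/3$) violates it for every $\alpha\in(1/2,1)$. Your proposed sanity check would not expose this. At $\alpha=1/3$ both constants vanish and $\nu\equiv0$ on the Clifford torus, so the missing term is invisible there; a test at $\alpha$ close to $1$ would catch it.

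Two smaller repairs are also needed. First, the balancing must use the weight $f$ (not $f^2$) to produce $\int_\Sigma f\,u_i\,dA=0$. Second, the energy bound should read $\int_\Sigma|\nabla(g\circ\Phi)|^2dA\le 2r^2\int_\Sigma|H_x|^2dA$, with $H_x$ the Euclidean half-trace mean curvature in $\mathbb R^8$. Equivalently, it is $\int_\Sigma(2r^2|H^{\Sph^7(r)}_x|^2+2)\,dA$, not $\int_\Sigma(r^2|\vec H_\Phi|^2+2)\,dA$. Your treatment of $\alpha=1$, either directly via $\Sph^3\subset\mathbb R^4$ or by continuity of $\lambda_2$ in $\alpha$, is fine.
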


The angular term in \eqref{eq:intro-main} is nonpositive and retains
information about the interaction between the surface and the Hopf
fibration.  In particular, the estimate detects the critical Berger metric
and yields the following rigidity statement.

\begin{mainthm}\label{thm:intro-rigidity}
Let $1/3\leq\alpha\leq1$, and let
$\varphi:\Sigma^2\to\Sph^3_\alpha$ be a closed, connected, two-sided
immersion with $\chi(\Sigma)\leq0$.  Then $\lambda_2(\mathcal J)\leq0$.  Moreover:
\begin{enumerate}[label={\rm (\arabic*)}]
 \item if $1/3<\alpha\leq1$, then $\lambda_2(\mathcal J)<0$;
 \item if $\alpha=1/3$ and $\lambda_2(\mathcal J)=0$, then the image of $\varphi$
 is congruent to the minimal Clifford torus and $\varphi$ factors through a
 finite covering of its standard embedding.  Conversely, the standard
 Clifford-torus embedding has $\lambda_2(\mathcal J)=0$.  In particular, among
 embedded surfaces, equality holds if and only if the surface is congruent
 to the Clifford torus.
\end{enumerate}
\end{mainthm}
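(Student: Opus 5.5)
We derive the statement from Theorem~\ref{thm:intro-main}, treating first the inequality and then the two cases.

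\textbf{Sign and strict inequality.} For $1/3\le\alpha\le1$ every term on the right of \eqref{eq:intro-main} is nonpositive. The constant $\frac{(1+\alpha)(1-3\alpha)}{2\alpha}$ is $\le0$ exactly when $\alpha\ge1/3$. The Willmore term is $\le0$, and so is $4\pi\chi(\Sigma)/\Area(\Sigma)$, since $\chi\le0$. The angular term is $\le0$ because $0\le\nu^2\le1$ gives $(2-\nu^2)\nu^2\ge0$. For $\alpha=1$ the limiting estimate has constant $-2<0$. This proves $\lambda_2(\mathcal J)\le0$.

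For $1/3<\alpha<1$ the constant term is already strictly negative, so $\lambda_2<0$. The case $\alpha=1$ follows in the same way. This proves (1).

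\textbf{Equality at $\alpha=1/3$.} If $\lambda_2(\mathcal J)=0$, every term must vanish, which gives three facts.
\begin{itemize}
\item[(a)] $\chi(\Sigma)=0$, so $\Sigma$ is a torus (it is orientable, being two-sided in an orientable manifold).
\item[(b)] $\int H^2=0$, so $\varphi$ is minimal.
\item[(c)] $\int(2-\nu^2)\nu^2=0$. Since $2-\nu^2\ge1$, this forces $\nu\equiv0$: the Hopf field $\xi$ is tangent to $\Sigma$ everywhere.
\end{itemize}
A surface everywhere tangent to $\xi$ is a union of Hopf fibres, i.e.\ a Hopf cylinder $\pi^{-1}(\gamma)$ over an immersed closed curve $\gamma$ in the base sphere $\Sph^2(4)$ (up to normalization). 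We need a standard fact about Hopf cylinders in Berger spheres (as in Torralbo--Urbano): the mean curvature is a constant multiple of the geodesic curvature of $\gamma$. Hence minimality forces $\gamma$ to be a great circle, and the image is the Clifford-type torus $\pi^{-1}(\text{great circle})$, which is the minimal Clifford torus of $\Sph^3_\alpha$.

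Since $\varphi$ is then an immersion of a closed torus onto this embedded torus, it is a local diffeomorphism onto a compact connected surface, hence a finite covering of the standard embedding. If we only have a pointwise argument, we use connectedness of $\Sigma$ and the fact that the image is a closed subset.

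\textbf{The converse and the main obstacle.} For the standard Clifford torus $T$ in $\Sph^3_{1/3}$, we compute directly that $\lambda_2(\mathcal J)=0$. The torus $T$ is flat and homogeneous, so $|\sigma|^2+\Ric(N,N)$ is a constant $c$, and the spectrum of $-\mathcal J$ is $\{\mu_k-c\}$ with $\mu_k$ the Laplace eigenvalues of the flat torus. Then $\lambda_1=-c$, and we have to check that $c$ equals the first positive Laplace eigenvalue $\mu_1$ of $T$, so that $\lambda_2=\mu_1-c=0$.

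Alternatively, the inequality gives $\lambda_2\le0$, so it is enough to show $\lambda_2\ge0$. That means showing $-\mathcal J\ge0$ on functions orthogonal to constants, which is exactly the Torralbo--Urbano stability of the Clifford torus at $\alpha=1/3$, and we can cite it.

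I expect the main obstacle to be this explicit computation: the second fundamental form, $\Ric(N,N)$ for $N$ horizontal, and the lattice of $T$, all at $\alpha=1/3$. The geometric identification of minimal Hopf tori also needs care. The final statement on embedded surfaces follows because an embedding that finitely covers $T$ must be a one-sheeted covering, i.e.\ congruent to $T$.
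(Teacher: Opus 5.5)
Your proposal is correct and follows essentially the same route as the paper. The sign and strictness come from the nonpositive terms of \eqref{eq:intro-main} together with the round estimate at $\alpha=1$; equality at $\alpha=1/3$ forces $H\equiv0$, $\chi(\Sigma)=0$ and $\nu\equiv0$, so the image is a minimal Hopf torus over a great circle, which is the Clifford torus up to congruence, with $\varphi$ a finite covering of it. The converse computation you flag as the main obstacle is short: by \eqref{eq:potential} the potential on $\mathcal T$ is $c=4$ (since $H=K=\nu=0$), and the flat lattice gives Laplace eigenvalues $\mu_{m,n}=(m+n)^2/\alpha+(m-n)^2$, whose least positive value at $\alpha=1/3$ is $4$ (attained at $(1,-1)$ and $(1,0)$), so $\lambda_2(\mathcal J)=0$ exactly as in the paper.
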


Thus the novelty relative to \cite{TorralboUrbano2012} is spectral rather
than a reformulation of CMC stability.  Theorem~\ref{thm:intro-main} is an
explicit estimate for $\lambda_2(\mathcal J)$ valid for non-CMC immersions; it keeps
both a total-mean-curvature term and a Hopf-angle term. Theorem~\ref{thm:intro-rigidity}
then characterizes the exceptional Clifford torus
solely by the value of the second Jacobi eigenvalue within the entire class
of closed two-sided surfaces of nonpositive Euler characteristic.  For CMC
surfaces, the spectral conclusion is consistent with the stability
classification of Torralbo and Urbano, but neither stability nor the CMC
equation is an assumption in our results.

\textbf{The paper is organized as follows}.  Section~\ref{sec:berger} recalls the
curvature identities of the Berger sphere.  In Section~\ref{sec:auxiliary}{,}
we describe the projective and Euclidean embeddings and calculate the mean
curvature of the composed immersion.  Section~\ref{sec:conformal} gives the
weighted balancing and energy estimate.  The main eigenvalue bound is proved
in Section~\ref{sec:estimate}.  Finally, Section~\ref{sec:rigidity} proves
sharpness and rigidity at $\alpha=1/3$.

All manifolds and immersions are smooth.  Eigenvalues are repeated
according to multiplicity, and all integrals and differential operators on
$\Sigma$ are taken with respect to the metric induced by $\varphi$.

\section{Geometry of Berger spheres}\label{sec:berger}

Let
\(
 \Sph^3=\{(z,w)\in\mathbb C^2:|z|^2+|w|^2=1\}
\)
and let $g_1$ be the round metric of sectional curvature one.  The vector
field
\(
 V(z,w)=(iz,iw)
\)
is tangent to the fibers of the Hopf fibration and has unit length with
respect to $g_1$.  For $\alpha>0$, define
\begin{equation*}\label{eq:berger-metric}
 g_\alpha(X,Y)=g_1(X,Y)+(\alpha-1)g_1(X,V)g_1(Y,V).
\end{equation*}
Thus $g_\alpha(V,V)=\alpha$, and
\begin{equation*}\label{eq:xi}
 \xi=\frac{V}{\sqrt\alpha}
\end{equation*}
is the unit vertical Killing field.  We denote $(\Sph^3,g_\alpha)$ by
$\Sph^3_\alpha$.

Let $\varphi:\Sigma^2\to\Sph^3_\alpha$ be a two-sided immersion with unit
normal $N$, and define the angle function
\begin{equation*}\label{eq:angle}
 \nu=\langle N,\xi\rangle.
\end{equation*}
The sectional curvature of the tangent plane and the Ricci curvature in
the normal direction are{,} respectively{,}
\begin{equation}\label{eq:ambient-ricci}
 \overline K_\Sigma
 =\alpha+4(1-\alpha)\nu^2      \quad \text{ and } \quad
 \Ric_\alpha(N,N)
 =4-2\alpha-4(1-\alpha)\nu^2.
 \end{equation}
These formulas follow, for example, from the curvature tensor recorded in 
\cite[eq.\ (2.8) and (2.9)]{TorralboUrbano2022}.

For later use, we indicate how the dependence on $\nu$ arises.  If
$X\wedge Y$ is an orthonormal horizontal plane, its sectional curvature is
$4-3\alpha$, whereas a plane containing $\xi$ has sectional curvature~$\alpha$.  Since the squared area of the horizontal component of the
oriented tangent plane is $\nu^2$, the curvature of the tangent plane is
\[
 (4-3\alpha)\nu^2+\alpha(1-\nu^2)
 =\alpha+4(1-\alpha)\nu^2.
\]
Taking the trace of the ambient curvature operator over an orthonormal
basis of $N^\perp$ gives \eqref{eq:ambient-ricci}.  This also checks that,
when $\alpha=1$, both formulas reduce to those of the unit round sphere.

We use the convention $H=\tfrac12\operatorname{tr}\sigma$.  Since
$\det A=2H^2-\tfrac12|\sigma|^2$, the Gauss equation gives
\begin{equation}\label{eq:gauss}
 K=2H^2-\frac{|\sigma|^2}{2}
   +\alpha+4(1-\alpha)\nu^2.
\end{equation}
Combining \eqref{eq:ambient-ricci} and \eqref{eq:gauss}, we obtain the
identity
\begin{equation}\label{eq:potential}
 |\sigma|^2+\Ric_\alpha(N,N)
 =-2K+4(H^2+1)+4(1-\alpha)\nu^2.
\end{equation}

\section{The auxiliary spherical immersion}\label{sec:auxiliary}

Throughout this section, $0<\alpha<1$.  Let $\CP^2(1-\alpha)$ denote the
complex projective plane with holomorphic sectional curvature
$4(1-\alpha)$.  There is an isometric {embedding}
\begin{equation*}\label{eq:Falpha}
F_\alpha:\Sph^3_\alpha\to\CP^2(1-\alpha)
\end{equation*}
whose image is a geodesic sphere
\cite[Proposition~2.1]{TorralboUrbano2012}.  If $\eta$ is its unit normal and
$\widehat\sigma$ its second fundamental form, then
\begin{equation}\label{eq:Falpha-second-form}
 -\langle\widehat\sigma(v,w),\eta\rangle
 =\sqrt\alpha\mskip2mu\langle v,w\rangle
 +\frac{\alpha-1}{\sqrt\alpha}
  \langle v,\xi\rangle\langle w,\xi\rangle.
\end{equation}

The formula has two principal curvatures: $\sqrt\alpha$ on the horizontal
distribution and $(2\alpha-1)/\sqrt\alpha$ in the Hopf direction.  It is
therefore enough to verify \eqref{eq:Falpha-second-form} on a horizontal
orthonormal pair together with $\xi$; polarization then gives the stated
bilinear identity.  This is precisely the normalization in
\cite[eq. (2.2)]{TorralboUrbano2012}.  Notice that both the metric of
$\CP^2(1-\alpha)$ and the second fundamental form depend on the same scale.

Let
\(
 \Psi_0:\CP^2(1-\alpha)\to\mathbb R^8
\)
be the first standard isometric embedding, with second fundamental form
$\overline\sigma$.  Its extrinsic geometry is described by
\begin{align}
 \langle\overline\sigma(x,y),\overline\sigma(v,w)\rangle
 ={}&2(1-\alpha)\langle x,y\rangle\langle v,w\rangle+(1-\alpha)\bigl(
 \langle x,w\rangle\langle y,v\rangle
 +\langle x,v\rangle\langle y,w\rangle\notag\\
 &
 +\langle x,Jw\rangle\langle y,Jv\rangle
 +\langle x,Jv\rangle\langle y,Jw\rangle
 \bigr),                                      \label{eq:standard-B}
\end{align}
where $J$ is the complex structure; see
\cite{Ros1984} or \cite[eq. (2.3)]{TorralboUrbano2012}.

The next observation provides the key ingredient for the conformal balancing argument.

\begin{lemma}\label{lem:spherical-image}
The image of $\Psi_0$ is contained in the round sphere
\begin{equation*}\label{eq:spherical-target}
 \Sph^7(R_\alpha)\subset\mathbb R^8,
 \qquad R_\alpha=\frac1{\sqrt{3(1-\alpha)}}.
\end{equation*}
\end{lemma}

\begin{proof}
Identify $\mathbb R^8$ with
\(
 \mathcal H_0(3)=
 \{A\in\operatorname{Herm}(3):\operatorname{tr}A=0\},
 \) endowed with \(\langle A,B\rangle=\operatorname{tr}(AB).
\)
For $[z]\in\CP^2$, let
\(
P_z=\frac{zz^*}{|z|^2},
\)
where \(z^*=\overline{z}^{\mskip2mu T}\) denotes the conjugate transpose of the
column vector \(z\in\mathbb C^3\setminus\{0\}\). Then \(P_z\) is the
Hermitian orthogonal projection onto the complex line spanned by \(z\);
in particular,
\(
P_z^2=P_z
\) {and} \(
\operatorname{tr}P_z=1.
\)
With this notation, the chosen normalization of the standard embedding is
\begin{equation*}\label{eq:standard-explicit}
 \Psi_0([z])=
 \frac1{\sqrt{2(1-\alpha)}}
 \left(P_z-\frac13I\right).
\end{equation*}
Since
\(
 \operatorname{tr}\bigl(P_z-\frac13I\bigr)^2=\frac23,
\)
we have $|\Psi_0([z])|^2=1/[3(1-\alpha)]$, proving the claim.
\end{proof}

\begin{lemma}\label{lem:kahler-angle}
Let $\{e_1,e_2\}$ be a positively oriented orthonormal basis of
$T_p\Sigma$.  After choosing the compatible orientations of the geodesic
sphere and of $\Sigma$, one has
\begin{equation*}\label{eq:kahler-angle}
 \langle Je_1,e_2\rangle=\nu.
\end{equation*}
In particular, the square of the K\"ahler function of the surface in
$\CP^2(1-\alpha)$ is $\nu^2$ and is independent of these orientation choices.
\end{lemma}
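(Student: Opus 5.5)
The plan is to identify how the complex structure $J$ of $\CP^2(1-\alpha)$ acts on the tangent space of the geodesic sphere $F_\alpha(\Sph^3_\alpha)$, and then to compute $\langle Je_1,e_2\rangle$ by linear algebra in the orthonormal frame $\{e_1,e_2,N,\eta\}$.

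\textbf{Step 1: $J\eta=\pm\xi$.} For a geodesic sphere in a complex space form, the structure vector $-J\eta$ is tangent to the sphere. It is an eigenvector of the shape operator, with the principal curvature distinct from that of the complementary $J$-invariant distribution. Formula \eqref{eq:Falpha-second-form} shows that, for $\alpha\neq 1/2$, the distinguished principal direction is $\xi$ (eigenvalue $(2\alpha-1)/\sqrt\alpha$), while the horizontal distribution has eigenvalue $\sqrt\alpha$. Hence $J\eta=\pm\xi$, and we fix the orientation of $\eta$ so that $J\eta=\xi$. For $\alpha=1/2$ this argument degenerates. There I would either argue by continuity in $\alpha$ of the explicit model, or read $J\eta=\xi$ off the construction in \cite[Proposition~2.1]{TorralboUrbano2012}. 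In that construction the sphere is $\{|z|^2+|w|^2=r^2\}$-type, the radial field $\eta$ is sent by $J$ to the Hopf field, and this is exactly how $\xi$ arises.

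\textbf{Step 2: the frame identity.} Since $J\eta=\xi$ is a unit vector orthogonal to $\eta$, the three vectors $\xi$, $e_1$, $e_2$ lie in $T\Sph^3_\alpha=\eta^\perp$. Because $J$ is orthogonal and skew, $J$ maps the plane $\operatorname{span}\{\eta,\xi\}$ to itself, so it also preserves its orthogonal complement, the horizontal plane $\mathcal D$. Now write $\xi=\nu N+\xi^T$ with $\xi^T\in T\Sigma$. Using skewness and $J^2=-1$,
\[
\langle Je_1,e_2\rangle=\langle J e_1, e_2\rangle,\qquad \langle JN,\eta\rangle=-\langle N,J\eta\rangle=-\nu .
\]
Since $J$ is an orthogonal complex structure on the oriented 4-space $T\CP^2$, the Kähler form $\omega=\langle J\cdot,\cdot\rangle$ is self-dual (or anti-self-dual, depending on orientation). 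With respect to the oriented frame $(e_1,e_2,N,\eta)$ this gives $\omega(e_1,e_2)=\pm\omega(N,\eta)$. Therefore $\langle Je_1,e_2\rangle=\pm\langle JN,\eta\rangle=\mp\nu$, and choosing compatible orientations of the geodesic sphere and of $\Sigma$ makes the sign $+$. Because every orientation change only flips a sign, $\nu^2$ is the squared Kähler function regardless of these choices.

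\textbf{Main obstacle.} The delicate point is Step 1, namely that $J\eta$ is exactly $\xi$ (up to sign), including the degenerate case $\alpha=1/2$. I would settle it concretely with the explicit model: realize $\CP^2$ as lines in $\mathbb C^3$, take the geodesic sphere centered at $[0{:}0{:}1]$, and check by direct computation that $J$ applied to the radial direction is the Hopf field $(iz,iw)$, suitably normalized. The self-duality step is standard four-dimensional linear algebra.
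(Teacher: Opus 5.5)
Your proof is correct, and it takes a different route from the paper in both steps.

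\textbf{Step 1.} The paper does not justify $J\eta=\pm\xi$. It simply asserts $dF_\alpha(\xi)=-J\eta$ as a feature of the Torralbo--Urbano construction. You derive it instead. You match the eigenspaces of the shape operator in \eqref{eq:Falpha-second-form} with the known principal structure of geodesic spheres: $J\eta$ is principal, and the holomorphic distribution carries a single principal curvature.

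\textbf{Step 2.} The paper uses an adapted frame with $e_1\perp\xi^\top$, so that $\xi=ae_2+\nu N$. It shows $\langle Je_1,\eta\rangle=0$, hence $Je_1\in\operatorname{span}\{e_2,N\}$, and then states $Je_1=\nu e_2-aN$ after an orientation choice. The step that fixes the coefficient, namely $\langle Je_1,\xi\rangle=-\langle Je_1,J\eta\rangle=-\langle e_1,\eta\rangle=0$, is left implicit there. You instead use the (anti-)self-duality of the K\"ahler form in the frame $(e_1,e_2,N,\eta)$ to get $\langle Je_1,e_2\rangle=\pm\langle JN,\eta\rangle=\mp\nu$.

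\textbf{What each route buys.} Your argument is frame-independent and handles every orthonormal basis of $T_p\Sigma$ at once. It also shows that the sign is exactly the comparison of $(e_1,e_2,N,\eta)$ with the complex orientation. The paper's argument avoids the Hodge star and yields an explicit decomposition of $Je_1$.

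\textbf{One correction.} $\alpha=1/2$ is not a degenerate case for your Step 1. The eigenvalues $\sqrt\alpha$ and $(2\alpha-1)/\sqrt\alpha$ coincide only when $\alpha=1$. So for every $0<\alpha<1$, the line spanned by $\xi$ is the unique one-dimensional eigenspace. At $\alpha=1/2$ the Hopf principal curvature merely vanishes, which plays no role in the eigenspace matching. Your continuity or explicit-model fallback is therefore unnecessary.
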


\begin{proof}
Let \(\xi_\alpha\) denote the unit vertical vector field of the Hopf
fibration of \(\mathbb S_\alpha^3\). A distinguished feature of the
isometric immersion
\(
 F_\alpha:\mathbb S_\alpha^3
 \to \mathbb{CP}^2(1-\alpha)
\)
is that the Hopf direction is mapped onto the structure direction of the
real hypersurface
\(
 M^3=F_\alpha(\mathbb S_\alpha^3)
 \subset\mathbb{CP}^2(1-\alpha),
\)
that is, if \(\eta\) is a suitable choice of unit normal to
\(M^3\), then
\(
 dF_\alpha(\xi_\alpha)=-J\eta,
\)
where \(J\) denotes the complex structure of
\(\mathbb{CP}^2(1-\alpha)\).

Thus, after identifying
\(\mathbb S_\alpha^3\) with its image under \(F_\alpha\), we use the same
notation \(\xi\) for the unit Hopf vector field and for the structure
vector field of \(M^3\), that is,
\(
 \xi=-J\eta.
\)
Changing the orientation of \(\eta\) changes the sign of this identity,
but does not affect any of the squared quantities used below.

Now let
\(
 \varphi:\Sigma^2\to\mathbb S_\alpha^3
\)
be an oriented immersed surface. Through the composition
\(F_\alpha\circ\varphi\), we regard \(\Sigma\) as a surface contained in
the real hypersurface
\[
 \Sigma^2\subset M^3=F_\alpha(\mathbb S_\alpha^3)
 \subset\mathbb{CP}^2(1-\alpha).
\]
Let \(N\) be the unit normal of \(\Sigma\) in \(M^3\). Along \(\Sigma\),
decompose the structure vector field as
\[
 \xi=\xi^\top+\nu N,
 \qquad
 \nu=\langle\xi,N\rangle.
\]
Choose a local oriented orthonormal frame \(\{e_1,e_2\}\) of
\(T\Sigma\) such that \(e_1\perp\xi^\top\). Thus, after possibly changing
the sign of \(e_2\), we may write
\[
 \xi=a e_2+\nu N,
 \qquad a^2+\nu^2=1.
\]
Using \(J\eta=-\xi\), we obtain
\[
 \langle Je_1,\eta\rangle
 =-\langle e_1,J\eta\rangle
 =\langle e_1,\xi\rangle
 =0.
\]
Therefore, as \(
 \langle Je_1,e_1\rangle=0
\), we deduce that
\(
 Je_1\in\operatorname{span}\{e_2,N\}.
\)
After choosing the orientations of
\(\{e_1,e_2,N,\eta\}\) compatibly with the Hermitian orientation of
\(\mathbb{CP}^2(1-\alpha)\), one has
\[
 Je_1=\nu e_2-aN,
\]
and so
\(
 \langle Je_1,e_2\rangle
 =\nu.
\)
\end{proof}

Consider the composed Euclidean immersion
\begin{equation}\label{eq:composition}
 x=\Psi_0\circ F_\alpha\circ\varphi:
 \Sigma\to\Sph^7(R_\alpha)\subset\mathbb R^8,
\end{equation}
and denote its Euclidean mean curvature vector by $H_x$.

\begin{proposition}\label{prop:Hx}
For the immersion \eqref{eq:composition},
\begin{equation}\label{eq:Hx}
 |H_x|^2
 =H^2+
 \left[\sqrt\alpha+
 \frac{\alpha-1}{2\sqrt\alpha}(1-\nu^2)\right]^2
 +(1-\alpha)(3+\nu^2).
\end{equation}
\end{proposition}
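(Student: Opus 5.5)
Since $\Psi_0$, $F_\alpha$ and $\varphi$ are isometric immersions, we will compute the mean curvature of the composition $x$ by splitting it along the orthogonal decomposition
\[
T\mathbb R^8 = d\Psi_0\bigl(T\Sigma\oplus\mathbb R N\oplus\mathbb R\eta\bigr)\oplus T^\perp\CP^2(1-\alpha).
\]
For a composition of isometric immersions, second fundamental forms add. With $\{e_1,e_2\}$ orthonormal on $\Sigma$,
\[
2H_x=\sum_{i}\bigl(\sigma(e_i,e_i)+\widehat\sigma(e_i,e_i)+\overline\sigma(e_i,e_i)\bigr).
\]
Here the three summands lie in $\mathbb R N$, $\mathbb R\eta$ and the normal space of $\Psi_0$, respectively. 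These spaces are mutually orthogonal, so
\[
|H_x|^2=H^2+\tfrac14\Bigl|\sum_i\widehat\sigma(e_i,e_i)\Bigr|^2+\tfrac14\Bigl|\sum_i\overline\sigma(e_i,e_i)\Bigr|^2,
\]
and the proof reduces to computing the last two terms.

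\textbf{The $\eta$-component.} We use \eqref{eq:Falpha-second-form}. Summing over $i$ gives
\[
-\Bigl\langle\sum_i\widehat\sigma(e_i,e_i),\eta\Bigr\rangle=2\sqrt\alpha+\frac{\alpha-1}{\sqrt\alpha}\,|\xi^\top|^2 .
\]
Since $\xi=\xi^\top+\nu N$, we have $|\xi^\top|^2=1-\nu^2$. Halving and squaring therefore yields exactly the middle term
\[
\Bigl[\sqrt\alpha+\tfrac{\alpha-1}{2\sqrt\alpha}(1-\nu^2)\Bigr]^2 .
\]

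\textbf{The $\Psi_0$-component.} We expand $\bigl|\sum_i\overline\sigma(e_i,e_i)\bigr|^2=\sum_{i,j}\langle\overline\sigma(e_i,e_i),\overline\sigma(e_j,e_j)\rangle$ using \eqref{eq:standard-B}, one term at a time.
\begin{itemize}
\item The term $2(1-\alpha)\langle x,y\rangle\langle v,w\rangle$ contributes $2(1-\alpha)\cdot 4=8(1-\alpha)$.
\item The two symmetric terms $\langle x,w\rangle\langle y,v\rangle+\langle x,v\rangle\langle y,w\rangle$ give $2\delta_{ij}$; summed over $i,j$ this is $4(1-\alpha)$.
\item The $J$-terms give $2\langle e_i,Je_j\rangle^2$. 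This vanishes when $i=j$; for $i\ne j$ it equals $2\langle Je_1,e_2\rangle^2=2\nu^2$ by Lemma~\ref{lem:kahler-angle}. The two off-diagonal pairs contribute $4(1-\alpha)\nu^2$.
\end{itemize}
The total is $(1-\alpha)(12+4\nu^2)$, and dividing by $4$ gives $(1-\alpha)(3+\nu^2)$, the last term of \eqref{eq:Hx}.

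The main points to check are, first, that the three pieces really are mutually orthogonal. The vector $\sum_i\widehat\sigma(e_i,e_i)$ is normal to the geodesic sphere inside $T\CP^2$, hence tangent to $\CP^2$, while $\overline\sigma$ is normal to $\CP^2$; and $N$ and $\eta$ are orthogonal. Second, Lemma~\ref{lem:kahler-angle} is the only point where the complex structure enters. Only $\nu^2$ appears there, so orientation choices are irrelevant.

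As a consistency check, the codimension in $\mathbb R^8$ does not affect these computations: $x$ lands in $\Sph^7(R_\alpha)$ by Lemma~\ref{lem:spherical-image}, but that fact is not needed here.
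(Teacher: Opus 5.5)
Your proof is correct and follows essentially the same route as the paper. You split $H_x$ into mutually orthogonal $N$-, $\eta$- and $\Psi_0$-normal components, get the $\eta$-part from \eqref{eq:Falpha-second-form} using $|\xi^\top|^2=1-\nu^2$, and get the $\Psi_0$-part from \eqref{eq:standard-B} together with Lemma~\ref{lem:kahler-angle}; your term-by-term count $(1-\alpha)(12+4\nu^2)$ simply writes out the summation over $(e_i,e_i,e_j,e_j)$ that the paper states without detail.
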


\begin{proof}
Let $\{e_1,e_2\}$ be a local orthonormal tangent frame on $\Sigma$.  Since
\[
 \sum_{i=1}^2\langle e_i,\xi\rangle^2
 =|\xi^\top|^2=1-\nu^2,
\]
formula \eqref{eq:Falpha-second-form} gives
\[
 \frac12\sum_{i=1}^2\widehat\sigma(e_i,e_i)
 =-\left[\sqrt\alpha+
 \frac{\alpha-1}{2\sqrt\alpha}(1-\nu^2)\right]\eta.
\]
Thus the mean curvature vector of $F_\alpha\circ\varphi$ in $\CP^2$ is
the orthogonal sum of $HN$ and the preceding vector. By Lemma~\ref{lem:kahler-angle}, its K\"ahler function satisfies
$\langle Je_1,e_2\rangle^2=\nu^2$.  Formula \eqref{eq:standard-B} then yields
\[
 \left|\frac12\sum_{i=1}^2
 \overline\sigma(e_i,e_i)\right|^2
 =(1-\alpha)(3+\nu^2).
\]
Indeed, this follows by applying \eqref{eq:standard-B} to
$(e_i,e_i,e_j,e_j)$ and summing over $i,j$.  Explicitly,
\[
 \frac14\sum_{i,j=1}^2
 \langle\overline\sigma(e_i,e_i),
 \overline\sigma(e_j,e_j)\rangle
 =(1-\alpha)\bigl(3+\langle Je_1,e_2\rangle^2\bigr).
\]
The three contributions belong to mutually orthogonal normal subspaces,
and \eqref{eq:Hx} follows.
\end{proof}

\section{Weighted balancing and conformal energy}\label{sec:conformal}

We record the version of the Hersch--Li--Yau argument required below.  The
weight will later be a positive first eigenfunction of $-\mathcal J$.

\begin{proposition}\label{prop:balancing}
Let $x:\Sigma\to\Sph^m(R)\subset\mathbb R^{m+1}$ be an immersion of a
closed surface, and let $u>0$ be continuous.  There exists
$\Gamma\in\Conf(\Sph^m)$ such that
\begin{equation*}\label{eq:Phi}
 \Phi=\Gamma\circ\frac{x}{R}:\Sigma\to\Sph^m
\end{equation*}
satisfies
\begin{equation}\label{eq:balancing-general}
 \int_\Sigma u\mskip2mu\Phi\mskip2mu dA=0
 \qquad\text{in }\mathbb R^{m+1}.
\end{equation}
If $H_x$ is the Euclidean mean curvature vector of $x$, then
\begin{equation}\label{eq:energy-general}
 \int_\Sigma|d\Phi|^2\mskip1mu dA
 \leq2\int_\Sigma|H_x|^2\mskip1mu dA.
\end{equation}
\end{proposition}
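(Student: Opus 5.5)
The proof has two parts: a topological balancing step that produces $\Gamma$, and an energy estimate that rests on conformal invariance.

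\textbf{Balancing.} Write $y=x/R:\Sigma\to\Sph^m$. For $a$ in the open unit ball $B^{m+1}$, let $\Gamma_a\in\Conf(\Sph^m)$ be the standard conformal dilation that pushes mass toward $a/|a|$, with $\Gamma_0=\mathrm{id}$. Define
\[
 G(a)=\frac{\int_\Sigma u\,\Gamma_a\circ y\,dA}{\int_\Sigma u\,dA}\in B^{m+1}.
\]
The image lies in the open ball: it is a weighted average of points of the sphere, and these points are not all equal, since $y(\Sigma)$ is a surface and not a single point. The map $G$ is continuous on $B^{m+1}$. As $a\to p\in\Sph^m$, we have $\Gamma_a\circ y\to p$ pointwise off the preimage $y^{-1}(-p)$. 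That preimage has measure zero, because $y$ is an immersion of a $2$-manifold and its fibers are discrete. Since $u$ is bounded, dominated convergence gives $G(a)\to p$, so $G$ extends continuously to the closed ball as the identity on the boundary. If $G$ had no zero, then $G/|G|$ would be a retraction of $\overline{B}^{m+1}$ onto $\Sph^m$, which Brouwer's theorem forbids. Hence some $a$ satisfies $G(a)=0$, and taking $\Gamma=\Gamma_a$ gives \eqref{eq:balancing-general}.

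\textbf{Energy.} Regard $\Phi=\Gamma\circ y$ as an immersion into $\Sph^m\subset\mathbb R^{m+1}$. Because $\Phi$ is conformal to $y$ in the induced metrics, we have $|d\Phi|^2\,dA_g=2\,dA_\Phi$, where $g$ is the metric induced by $x$. Indeed, at each point $d\Phi$ is a conformal map from $(T\Sigma,g)$, so $|d\Phi|_g^2=2\,\mathrm{Jac}$. Hence $\int_\Sigma|d\Phi|^2dA=2\Area(\Phi)$.

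Next we bound the area of $\Phi$. For a surface $\Phi$ in the unit sphere $\Sph^m$, let $H_\Phi$ be its Euclidean mean curvature vector in $\mathbb R^{m+1}$ and $H^S_\Phi$ its mean curvature vector in $\Sph^m$. These satisfy $|H_\Phi|^2=|H^S_\Phi|^2+1$, so
\[
 \Area(\Phi)\le\int_\Sigma\bigl(|H^S_\Phi|^2+1\bigr)dA_\Phi=\int_\Sigma |H_\Phi|^2\,dA_\Phi .
\]
The spherical Willmore functional $\int(|H^S|^2+1)\,dA$ is invariant under conformal transformations of $\Sph^m$. This follows from conformal invariance of $\int|\mathring\sigma|^2dA$ together with Gauss--Bonnet, or equivalently from stereographic projection and the conformal invariance of Euclidean Willmore energy. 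Applying this invariance,
\[
 \int_\Sigma|H_\Phi|^2dA_\Phi=\int_\Sigma|H_y|^2dA_y .
\]
Finally, scaling by $1/R$ multiplies $|H|^2$ by $R^2$ and area by $R^{-2}$, so $\int|H_y|^2dA_y=\int|H_x|^2dA$. Combining these gives $\int|d\Phi|^2\le2\int|H_x|^2$, which is \eqref{eq:energy-general}.

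\textbf{Main obstacle.} The delicate point is the boundary continuity of $G$. One must check that $y^{-1}(-p)$ has measure zero; this holds because immersions are locally injective and $\Sigma$ is compact, so the fibers are finite. One must also check that convergence to $p$ holds off that set as $a\to p$ along any direction. If $\Gamma_a$ is written explicitly as $\Gamma_a(z)=\frac{(1-|a|^2)(z+ \dots)}{\dots}+a$, or via the dilation centered at $a/|a|$, this is a direct verification. The conformal-invariance step is standard and can be cited.
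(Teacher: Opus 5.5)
Your proof is correct and follows the paper's argument. Both use the weighted Hersch--Li--Yau center-of-mass map, extended to the closed ball and combined with the no-retraction theorem, and both then combine $\int_\Sigma|d\Phi|^2\,dA=2\Area(\Phi)$, the bound $|H_\Phi|^2=1+|H^{\Sph^m}_\Phi|^2\geq 1$, and the scaling and conformal invariance of the Willmore energy. Your finiteness argument for $y^{-1}(-p)$ and your derivation of Willmore invariance via $\int|\mathring\sigma|^2\,dA$ and Gauss--Bonnet are just more explicit versions of the paper's appeal to ``no atoms'' and to Euclidean M\"obius invariance.
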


\begin{proof}
For $a$ in the open unit ball $B^{m+1}$, take the M\"obius transformation
\[
 \Gamma_a(y)=
 \frac{(1-|a|^2)y+2(1+\langle a,y\rangle)a}
 {1+2\langle a,y\rangle+|a|^2},
 \qquad y\in\Sph^m.
\]
It maps $\Sph^m$ conformally onto itself.  The map
\[
 \mathcal F(a)=\frac1{\int_\Sigma u\mskip2mu dA}
 \int_\Sigma u\mskip2mu\Gamma_a(x/R)\mskip2mu dA
\]
is continuous.  As $a\longrightarrow b\in\partial B^{m+1}$, the maps $\Gamma_a$
converge to the constant map $b$ away from one point.  Since $u\mskip2mu dA$ has no
atoms, dominated convergence shows that $\mathcal F(a)\longrightarrow b$.  Hence
$\mathcal F$ extends to the closed ball and equals the identity on its
boundary.  If $0$ were not in its image, radial projection would produce a
retraction of the closed ball onto its boundary, a contradiction.  Thus
there is $a_0\in B^{m+1}$ such that $\mathcal F(a_0)=0$.  This is the
weighted Hersch center-of-mass argument \cite{Hersch1970,LiYau1982}.  Taking
$\Gamma=\Gamma_{a_0}$ proves \eqref{eq:balancing-general}.

The dilation $x\longmapsto x/R$ and $\Gamma$ are restrictions of conformal
transformations of $\mathbb R^{m+1}\cup\{\infty\}$.  Because $\Phi$ is a
conformal immersion into the unit sphere,
\[
 \frac12\int_\Sigma|d\Phi|^2\mskip1mu dA=\Area(\Phi),
\]
with multiplicity.  The Euclidean Willmore functional of a closed surface
is invariant under this transformation.  For a dilation, the scaling of
$|H|^2$ cancels that of the area element; inversion invariance gives the
nontrivial M\"obius step.  Therefore
\[
 \int_\Sigma|H_x|^2\mskip1mu dA
 =\int_\Sigma|H^{\mathbb R^{m+1}}_\Phi|^2\mskip1mu dA_\Phi.
\]
For a surface in the unit sphere,
\(
 |H^{\mathbb R^{m+1}}_\Phi|^2
 =1+|H^{\Sph^m}_\Phi|^2\geq1.
\)
Hence,
\[
 \Area(\Phi)
 \leq\int_\Sigma|H_x|^2\mskip1mu dA,
\]
which is equivalent to \eqref{eq:energy-general}.
\end{proof}

\section{The second eigenvalue estimate}\label{sec:estimate}

Next, we derive a sharp estimate for the second eigenvalue of the Jacobi operator. The result reads as follows.

\begin{theorem}\label{thm:main}
Let $0<\alpha<1$, and let
$\varphi:\Sigma^2\to\Sph^3_\alpha$ be a closed two-sided immersion.  Then
\begin{align}
 \lambda_2(\mathcal J)\leq{}&
 \frac{(1+\alpha)(1-3\alpha)}{2\alpha}
 -\frac{2}{\Area(\Sigma)}\int_\Sigma H^2\mskip1mu dA
 +\frac{4\pi\chi(\Sigma)}{\Area(\Sigma)}\notag\\
 &-\frac{(1-\alpha)^2}{2\alpha\Area(\Sigma)}
 \int_\Sigma(2-\nu^2)\nu^2\mskip1mu dA.                 \label{eq:main}
\end{align}
In particular,
\begin{equation}\label{eq:coarse}
 \lambda_2(\mathcal J)\leq
 \frac{(1+\alpha)(1-3\alpha)}{2\alpha}
 -\frac{2}{\Area(\Sigma)}\int_\Sigma H^2\mskip1mu dA
 +\frac{4\pi\chi(\Sigma)}{\Area(\Sigma)}.
\end{equation}
\end{theorem}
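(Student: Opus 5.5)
The plan is to use the coordinates of a balanced conformal map into the round sphere as test functions in the min--max characterization of $\lambda_2(\mathcal J)$.

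\textbf{Test functions.} Let $u>0$ be a first eigenfunction of $-\mathcal J$. It can be chosen positive because $\Sigma$ is closed and $\mathcal J$ is a Schr\"odinger operator, and $\lambda_1$ is simple. Then
\[
 \lambda_2(\mathcal J)=\inf\Bigl\{\frac{\int_\Sigma\bigl(|\nabla f|^2-(|\sigma|^2+\Ric_\alpha(N,N))f^2\bigr)dA}{\int_\Sigma f^2dA}:\ f\neq0,\ \int_\Sigma uf\,dA=0\Bigr\}.
\]
I apply Proposition~\ref{prop:balancing} to the immersion $x$ of \eqref{eq:composition}, which lies in $\Sph^7(R_\alpha)$ by Lemma~\ref{lem:spherical-image}, with weight $u$. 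This gives $\Phi=(\Phi_1,\dots,\Phi_8):\Sigma\to\Sph^7$ with $\int_\Sigma u\Phi_i\,dA=0$ for every $i$. Each $\Phi_i$ is therefore admissible, giving
\[
 \lambda_2\int_\Sigma\Phi_i^2\,dA\le\int_\Sigma|\nabla\Phi_i|^2-(|\sigma|^2+\Ric_\alpha(N,N))\Phi_i^2\,dA .
\]
This also holds trivially when $\Phi_i\equiv0$.

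\textbf{Summing and inserting the earlier estimates.} Summing over $i$ and using $\sum_i\Phi_i^2=1$ gives
\[
 \lambda_2\Area(\Sigma)\le\int_\Sigma|d\Phi|^2\,dA-\int_\Sigma(|\sigma|^2+\Ric_\alpha(N,N))\,dA .
\]
I bound the energy by \eqref{eq:energy-general}, which gives $\int_\Sigma|d\Phi|^2\,dA\le 2\int_\Sigma|H_x|^2\,dA$. I rewrite the potential by \eqref{eq:potential} and apply Gauss--Bonnet, $\int_\Sigma 2K\,dA=4\pi\chi(\Sigma)$. The result is
\[
 \lambda_2\Area(\Sigma)\le 4\pi\chi(\Sigma)+\int_\Sigma\bigl(2|H_x|^2-4H^2-4-4(1-\alpha)\nu^2\bigr)dA .
\]

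\textbf{Pointwise algebra.} The remaining work is to simplify the integrand using Proposition~\ref{prop:Hx}. Write the bracket as
\[
 \frac{3\alpha-1+(1-\alpha)\nu^2}{2\sqrt\alpha}.
\]
Expanding its square and collecting terms in the integrand, I expect:
\begin{itemize}[label={}]
 \item the constant term to be $\frac{(3\alpha-1)^2}{2\alpha}+2-6\alpha=\frac{(1+\alpha)(1-3\alpha)}{2\alpha}$;
 \item the $H^2$ term to be $-2H^2$;
 \item the $\nu^2$ term to be $\frac{(1-\alpha)(3\alpha-1)}{\alpha}\nu^2-2(1-\alpha)\nu^2=-\frac{(1-\alpha)^2}{\alpha}\nu^2$;
 \item the $\nu^4$ term to be $+\frac{(1-\alpha)^2}{2\alpha}\nu^4$.
\end{itemize}
The $\nu^2$ and $\nu^4$ terms combine to $-\frac{(1-\alpha)^2}{2\alpha}(2-\nu^2)\nu^2$. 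Dividing by $\Area(\Sigma)$ yields \eqref{eq:main}. Since $0\le\nu^2\le1$, this last term is nonpositive, which gives \eqref{eq:coarse}.

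\textbf{Main obstacle.} The conceptual ingredients (balancing, Willmore invariance, mean curvature of the composed immersion) are already supplied by the earlier results. The one point needing care is the admissibility of the test functions. They must be orthogonal to the first eigenfunction itself, not merely have zero mean. This is exactly why the weight $u$ is used in the balancing, and why simplicity of $\lambda_1$ is needed.
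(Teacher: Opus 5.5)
Your proposal is correct and follows the paper's proof essentially step for step. Both arguments balance the composed immersion $x=\Psi_0\circ F_\alpha\circ\varphi$ into $\Sph^7(R_\alpha)$ against a positive first eigenfunction, sum the coordinate Rayleigh quotients, and bound the energy by $2\int_\Sigma|H_x|^2\,dA$; they then apply \eqref{eq:potential}, Proposition~\ref{prop:Hx}, Gauss--Bonnet and the same pointwise algebra, and your expansion via $\frac{3\alpha-1+(1-\alpha)\nu^2}{2\sqrt\alpha}$ checks out term by term. One small correction: the min--max step does not need simplicity of $\lambda_1$, since $\lambda_2(\mathcal J)$ equals the infimum of the Rayleigh quotient over functions orthogonal to any single first eigenfunction; what the argument actually uses is the positivity of that eigenfunction in the balancing step, which holds on a connected surface.
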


\begin{proof}
Let $u_1>0$ be a first eigenfunction of $-\mathcal J$.  By
Lemma~\ref{lem:spherical-image}, the immersion $x$ in
\eqref{eq:composition} takes values in $\Sph^7(R_\alpha)$.  Apply
Proposition~\ref{prop:balancing} with weight $u_1$.  We obtain
\(
 \Phi=(\Phi_1,\ldots,\Phi_8):\Sigma\longrightarrow\Sph^7
\)
such that
\begin{equation*}\label{eq:coordinate-orthogonality}
 \int_\Sigma u_1\Phi_A\mskip1mu dA=0,
 \qquad A=1,\ldots,8.
\end{equation*}
Thus every coordinate $\Phi_A$ is admissible in the min--max
characterization of $\lambda_2(\mathcal J)$.  Therefore
\[
 \lambda_2(\mathcal J)\int_\Sigma\Phi_A^2\mskip1mu dA
 \leq\int_\Sigma|\nabla\Phi_A|^2\mskip1mu dA
 -\int_\Sigma
 \bigl(|\sigma|^2+\Ric_\alpha(N,N)\bigr)\Phi_A^2\mskip1mu dA.
\]
Summing over $A$ and using $\sum_A\Phi_A^2=1$, we get
\begin{equation}\label{eq:rayleigh}
 \lambda_2(\mathcal J)\Area(\Sigma)
 \leq\int_\Sigma|d\Phi|^2\mskip2mu dA
 -\int_\Sigma\bigl(|\sigma|^2+\Ric_\alpha(N,N)\bigr)\mskip2mu dA.
\end{equation}

By Proposition~\ref{prop:balancing} and Proposition~\ref{prop:Hx},
\begin{equation*}\label{eq:energy-bound}
 \int_\Sigma|d\Phi|^2\mskip1mu dA
 \leq2\int_\Sigma|H_x|^2\mskip1mu dA.
\end{equation*}
Substitution of \eqref{eq:Hx} and \eqref{eq:potential} into
\eqref{eq:rayleigh} gives
\begin{align*}
 \lambda_2(\mathcal J)\Area(\Sigma)
 \leq\int_\Sigma\biggl\{
 -2H^2+2K
 &+2\left[\sqrt\alpha+
 \frac{\alpha-1}{2\sqrt\alpha}(1-\nu^2)\right]^2+6(1-\alpha)-4-2(1-\alpha)\nu^2
 \biggr\}\mskip2mu dA.
\end{align*}
The remaining calculation is the pointwise algebraic identity
\begin{align}
 &2\left[\sqrt\alpha+
 \frac{\alpha-1}{2\sqrt\alpha}(1-\nu^2)\right]^2
 +6(1-\alpha)-4-2(1-\alpha)\nu^2\notag\\
 &\qquad=
 \frac{(1+\alpha)(1-3\alpha)}{2\alpha}
 -\frac{(1-\alpha)^2}{2\alpha}(2-\nu^2)\nu^2. \notag   \label{eq:algebra}
\end{align}
Hence
\begin{align*}
 \lambda_2(\mathcal J)\Area(\Sigma)\leq{}&
 \frac{(1+\alpha)(1-3\alpha)}{2\alpha}\Area(\Sigma)
 -2\int_\Sigma H^2\mskip1mu dA+2\int_\Sigma K\mskip2mu dA\\
 &-\frac{(1-\alpha)^2}{2\alpha}
 \int_\Sigma(2-\nu^2)\nu^2\mskip2mu dA.
\end{align*}
Gauss--Bonnet now proves \eqref{eq:main}.  Since
{$2-\nu^2\geq0$}, dropping the last term gives
\eqref{eq:coarse}.
\end{proof}

For the sake of completeness, we present the proof of the following result due to the second author \cite{Mendes2019}, for the round sphere.

\begin{proposition}\label{prop:round}
For a closed two-sided immersion $\Sigma^2\to\Sph^3_1$, we have
\[
 \lambda_2(\mathcal J)\leq
 -2-\frac{2}{\Area(\Sigma)}\int_\Sigma H^2\mskip1mu dA
 +\frac{4\pi\chi(\Sigma)}{\Area(\Sigma)}.
\]
\end{proposition}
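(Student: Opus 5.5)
The plan is to run the argument of Theorem~\ref{thm:main} directly in the round case, with the inclusion $\Sph^3_1=\Sph^3\subset\mathbb R^4$ as the auxiliary spherical immersion. Let $u_1>0$ be a first eigenfunction of $-\mathcal J$, and let $x:\Sigma\to\Sph^3\subset\mathbb R^4$ be the immersion $\varphi$ itself, with $R=1$. Proposition~\ref{prop:balancing} applied with $m=3$ and weight $u_1$ gives a conformal map $\Phi=\Gamma\circ x:\Sigma\to\Sph^3$ such that $\int_\Sigma u_1\Phi_A\,dA=0$ for $A=1,\dots,4$. Each coordinate $\Phi_A$ is then admissible in the min--max characterization of $\lambda_2(\mathcal J)$. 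Summing the Rayleigh quotients and using $\sum_A\Phi_A^2=1$ gives, exactly as in \eqref{eq:rayleigh},
\[
 \lambda_2(\mathcal J)\Area(\Sigma)\leq\int_\Sigma|d\Phi|^2\,dA-\int_\Sigma\bigl(|\sigma|^2+\Ric_1(N,N)\bigr)\,dA .
\]

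Next I estimate the energy term. By \eqref{eq:energy-general}, $\int_\Sigma|d\Phi|^2\,dA\leq2\int_\Sigma|H_x|^2\,dA$. For a surface in the unit sphere $\Sph^3\subset\mathbb R^4$, the Euclidean mean curvature vector splits orthogonally into $HN$ and the position-vector term of unit length, so $|H_x|^2=H^2+1$. For the potential I use \eqref{eq:potential} with $\alpha=1$, where the $\nu$-term drops out:
\[
 |\sigma|^2+\Ric_1(N,N)=-2K+4(H^2+1).
\]
Substituting both bounds, the integrand becomes $2H^2+2-(-2K+4H^2+4)=-2H^2+2K-2$. Gauss--Bonnet, $\int_\Sigma K\,dA=2\pi\chi(\Sigma)$, then gives
\[
 \lambda_2(\mathcal J)\Area(\Sigma)\leq-2\Area(\Sigma)-2\int_\Sigma H^2\,dA+4\pi\chi(\Sigma),
\]
and dividing by $\Area(\Sigma)$ yields the claim.

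None of these steps is a real obstacle. The only points to check are that Proposition~\ref{prop:balancing} applies verbatim with $m=3$ and $R=1$, and that the identity $|H_x|^2=1+H^2$ holds with the convention $H=\tfrac12\operatorname{tr}\sigma$. Both are the same facts already used in the proof of Proposition~\ref{prop:balancing}.

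As a consistency check, letting $\alpha\to1$ in Proposition~\ref{prop:Hx} gives $|H_x|^2\to H^2+1$ (the $\CP^2$ term vanishes and the bracket tends to $1$), and $(1+\alpha)(1-3\alpha)/(2\alpha)\to-2$. This matches the limiting form stated in Theorem~\ref{thm:intro-main}.
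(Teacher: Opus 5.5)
Your proposal is correct and is essentially the paper's proof. You apply Proposition~\ref{prop:balancing} to the inclusion $\Sigma\to\Sph^3\subset\mathbb R^4$ with weight a first eigenfunction, insert $|H_x|^2=1+H^2$ and $|\sigma|^2+2=-2K+4H^2+4$ into \eqref{eq:rayleigh}, and conclude by Gauss--Bonnet. Like the paper, you prove the endpoint directly and use the $\alpha\to1^-$ limit of \eqref{eq:main} only as a consistency check, which is appropriate since the projective realization degenerates at $\alpha=1$.
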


\begin{proof}
Apply Proposition~\ref{prop:balancing} directly to the standard inclusion
$\Sigma\to\Sph^3\subset\mathbb R^4$.  Its Euclidean mean curvature vector
satisfies $|H^{\mathbb R^4}|^2=1+H^2$.  Repeating
\eqref{eq:rayleigh} and using
$|\sigma|^2+2=-2K+4H^2+4$ yields the stated estimate.  It also agrees with
the limit of \eqref{eq:main} as $\alpha\to1^-$.  Notice, however, that the
projective realization itself degenerates at $\alpha=1$; the endpoint is
proved directly rather than by inserting $\alpha=1$ into that realization.
\end{proof}

\section{Main result}\label{sec:rigidity}

We first extract the sign consequence of the refined estimate.

\begin{corollary}\label{cor:sign}
Let $1/3\leq\alpha\leq1$, and let
$\varphi:\Sigma^2\to\Sph^3_\alpha$ be a closed two-sided immersion with
$\chi(\Sigma)\leq0$.  Then $\lambda_2(\mathcal J)\leq0$.  If
$1/3<\alpha\leq1$, then $\lambda_2(\mathcal J)<0$.
\end{corollary}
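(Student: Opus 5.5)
The plan is to read the sign off directly from the estimates already proved, splitting into the cases $1/3\le\alpha<1$ and $\alpha=1$.

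\emph{Case $1/3\le\alpha<1$.} Apply Theorem~\ref{thm:main}. The right-hand side of \eqref{eq:main} has four terms, and I will show each is nonpositive.
\begin{itemize}
\item The constant $\frac{(1+\alpha)(1-3\alpha)}{2\alpha}$ is $\le0$ because $1-3\alpha\le0$. It is strictly negative when $\alpha>1/3$.
\item The term $-\frac{2}{\Area(\Sigma)}\int_\Sigma H^2\,dA$ is $\le0$ trivially.
\item The term $\frac{4\pi\chi(\Sigma)}{\Area(\Sigma)}$ is $\le0$ by the hypothesis $\chi(\Sigma)\le0$.
\item The angular term is $\le0$, since $0\le\nu^2\le1$ gives $(2-\nu^2)\nu^2\ge0$, and its coefficient $-(1-\alpha)^2/(2\alpha)$ is nonpositive.
\end{itemize}
Summing, $\lambda_2(\mathcal J)\le0$. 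When $1/3<\alpha<1$, the first term is strictly negative, so $\lambda_2(\mathcal J)<0$.

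\emph{Case $\alpha=1$.} Use Proposition~\ref{prop:round} in place of Theorem~\ref{thm:main}. There the constant is $-2<0$, the mean curvature term is $\le0$, and the Euler characteristic term is $\le0$. Hence $\lambda_2(\mathcal J)\le-2<0$, which covers the endpoint of the strict statement.

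\emph{Possible gap.} Connectedness is not assumed in the corollary, but the argument does not need it. The only ingredient that might require it is the positivity of the first eigenfunction $u_1$ used in Theorem~\ref{thm:main}. On a disconnected surface one can instead take a nonnegative first eigenfunction supported on one component. The balancing in Proposition~\ref{prop:balancing} then needs a weight that is merely nonnegative and not identically zero. The Hersch argument still goes through, because $u\,dA$ has no atoms and positive total mass. Every coordinate $\Phi_A$ is still $L^2$-orthogonal to $u_1$, so the min--max step is unchanged.

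Alternatively, I would note that the spectrum of a disconnected surface is the union of the spectra of its components. One can then handle that situation separately. This is the only point needing care; the rest is sign bookkeeping.
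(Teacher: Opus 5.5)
Your proof is correct and follows the paper's argument. For $1/3\le\alpha<1$, every term on the right of \eqref{eq:main} is nonpositive, and the constant is strictly negative when $\alpha>1/3$; the endpoint $\alpha=1$ comes from Proposition~\ref{prop:round}. Your connectedness remark is a valid refinement the paper leaves implicit: a nonnegative, nontrivial weight suffices for the balancing in Proposition~\ref{prop:balancing}, and the coordinates remain admissible in the min--max for $\lambda_2(\mathcal J)$.
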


\begin{proof}
For $1/3\leq\alpha<1$,
\[
 \frac{(1+\alpha)(1-3\alpha)}{2\alpha}\leq0,
\]
and all the other terms in \eqref{eq:main} are nonpositive.  The displayed
constant is strictly negative if $\alpha>1/3$.  At $\alpha=1$, use
Proposition~\ref{prop:round}.
\end{proof}

\subsection{The spectrum of the Clifford torus}

It is useful to compare the constant appearing in Theorem~\ref{thm:intro-main} with the
exact second Jacobi eigenvalue of the Clifford torus. This comparison
also explains the distinguished role of the critical parameter
$\alpha=1/3$ in our estimate.

Let
\[
 \mathcal{T}=
 \left\{(z,w)\in\Sph^3:
 |z|=|w|=\frac1{\sqrt2}\right\}
 \subset\Sph^3_\alpha
\]
be the Clifford torus, parametrized by
\[
 F(\theta,\varphi)
 =
 \frac1{\sqrt2}\bigl(e^{i\theta},e^{i\varphi}\bigr).
\]
The metric induced by $g_\alpha$ is
\[
 g_\mathcal{T}
 =
 \frac14\left[
 (1+\alpha)(d\theta^2+d\varphi^2)
 +2(\alpha-1)\mskip1mu d\theta\mskip1mu d\varphi
 \right].
\]
Consequently, the Fourier mode
$e^{i(m\theta+n\varphi)}$ is an eigenfunction of $-\Delta_\mathcal{T}$ with
eigenvalue
\begin{equation}\label{eq:flat-spectrum}
 \mu_{m,n}
 =
 \frac{(m+n)^2}{\alpha}+(m-n)^2,
 \qquad (m,n)\in\mathbb Z^2.
\end{equation}

The Clifford torus is minimal for every Berger metric, the Hopf field
is tangent to it, and its induced metric is flat. Thus
\[
 H=0,\qquad \nu=0,\qquad K=0.
\]
Moreover, the Gauss equation and the ambient Ricci-curvature formula
give
\(
 |\sigma|^2+\Ric_\alpha(N,N)=4.
\)
Therefore,
\(
 \mathcal{J}_\mathcal{T}=\Delta_\mathcal{T}+4,
\)
and the eigenvalues of $-\mathcal{J}_\mathcal{T}$ are
\[
 \lambda_{m,n}
 =
 \frac{(m+n)^2}{\alpha}+(m-n)^2-4.
\]

The constant functions yield the simple first eigenvalue $-4$. For
$0<\alpha\le1$, the smallest positive eigenvalue of $-\Delta_\mathcal{T}$ is
\[
 \min\left\{4,1+\frac1\alpha\right\}.
\]
Indeed, the value $4$ is attained at $(m,n)=(1,-1)$ and $(-1,1)$,
whereas $1+\alpha^{-1}$ is attained at
\[
 (m,n)\in\{(\pm1,0),(0,\pm1)\}.
\]
It follows that
\begin{equation}\label{eq:exact-second-clifford}
 \lambda_2(\mathcal{J}_\mathcal{T})
 =
 \begin{cases}
  0,
  & 0<\alpha\le\frac13,\\[5pt]
  \displaystyle\frac{1-3\alpha}{\alpha},
  & \frac13\le\alpha\le1.
 \end{cases}
\end{equation}
In particular,
\begin{equation}\label{eq:model-lambda2}
 \lambda_2(\mathcal{J}_\mathcal{T})=0
 \qquad\text{on }\mathcal{T}\subset\Sph^3_{1/3}.
\end{equation}

We now compare this exact value with the estimate of Theorem~\ref{thm:intro-main}.
Since $H=0$, $\chi(\mathcal{T})=0$, and $\nu=0$, the right-hand side of that
estimate, evaluated on $\mathcal{T}$, reduces to
\[
 C_\alpha
 =
 \frac{(1+\alpha)(1-3\alpha)}{2\alpha}.
\]
For $1/3\le\alpha\le1$, we have
\[
 C_\alpha-\lambda_2(\mathcal{J}_\mathcal{T})
 =
 \frac{(1-\alpha)(3\alpha-1)}{2\alpha}.
\]
Hence the estimate of Theorem~\ref{thm:intro-main}, together with its round counterpart
in Proposition~\ref{prop:round}, is an equality on the Clifford torus precisely at
the critical parameter $\alpha=1/3$ and at the round endpoint
$\alpha=1$. For $1/3<\alpha<1$, the estimate is strict even on the
Clifford torus. These computations explain why we focus on the case of vanishing second eigenvalue. As a by-product, our approach also provides a framework for classifying stable or index-one surfaces, depending on the geometric setting, in the spirit of \cite{TorralboUrbano2012}.

\medskip

\subsection{Sharpness and rigidity}
We next recall the elementary global consequence of the condition $\nu=0$. For more details see the discussion of Hopf tori in
\cite{TorralboUrbano2012}.

\begin{lemma}\label{lem:Hopf-surface}
Let $\varphi:\Sigma^2\to\Sph^3_\alpha$ be a closed connected immersion.
If $\nu=0$, then its image is a Hopf torus $\pi^{-1}(\gamma)$, where
$\pi:\Sph^3_\alpha\to\Sph^2$ is the Hopf fibration and $\gamma$ is a
closed immersed curve in the base.  Moreover,
\begin{equation}\label{eq:Hopf-H}
 2H=k_g
\end{equation}
up to the simultaneous choice of orientations, where $k_g$ is the geodesic
curvature of $\gamma$ in the base metric.
\end{lemma}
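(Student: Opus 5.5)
The plan is to show first that $\nu=0$ makes $\xi$ tangent to $\Sigma$, so $\Sigma$ is a union of Hopf fibres. Then $\Sigma$ is the preimage of a curve in the base, and the mean curvature can be computed along horizontal lifts.

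\textbf{Step 1: $\Sigma$ is foliated by Hopf fibres.} Since $\nu=\langle N,\xi\rangle=0$, the unit field $\xi$ is tangent to $\varphi(\Sigma)$ at every point. Hence it pulls back to a nowhere-vanishing tangent vector field $\xi^\top$ on $\Sigma$ with $d\varphi(\xi^\top)=\xi$. Its integral curves are mapped by $\varphi$ onto integral curves of $\xi$, which are the Hopf fibres. Since $\Sigma$ is closed, $\xi^\top$ is complete. Each fibre is a closed geodesic of length $2\pi\sqrt\alpha$, so each orbit of $\xi^\top$ covers a fibre finitely many times, and the flow of $\xi^\top$ is periodic up to finite covering. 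Thus $\Sigma$ carries a nonvanishing vector field, which already forces $\chi(\Sigma)=0$; being orientable (two-sided in an orientable manifold), $\Sigma$ is a torus.

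\textbf{Step 2: the image is $\pi^{-1}(\gamma)$.} Consider $\pi\circ\varphi:\Sigma\to\Sph^2$. Its differential kills $\xi^\top$ and is injective on the orthogonal complement, because $d\varphi$ is injective and $d\pi$ is injective on horizontal vectors. So $\pi\circ\varphi$ has rank one everywhere and is constant along the orbits of $\xi^\top$. Take a closed transversal curve $c$ in $\Sigma$ meeting every orbit, for instance a closed orbit of the orthogonal line field after passing to the quotient circle $\Sigma/\!\sim$. The orbit space is a closed $1$-manifold, i.e.\ a circle, and $\pi\circ\varphi$ descends to an immersion $\gamma$ of that circle. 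Its image is therefore a closed immersed curve, and $\varphi(\Sigma)=\pi^{-1}(\gamma(S^1))$. I expect this to be the main obstacle: making the quotient rigorous requires either the periodicity of the flow from Step~1 or a direct argument that the leaf space of a foliation of a torus by closed leaves is a circle. I will invoke the discussion in \cite{TorralboUrbano2012} for this point.

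\textbf{Step 3: $2H=k_g$.} Work with the orthonormal frame $\{\xi,X\}$, where $X$ is the horizontal unit tangent to $\Sigma$. The frame entry $\langle\nabla_\xi\xi,N\rangle$ vanishes because the fibres are geodesics. For the other entry, $\pi:\Sph^3_\alpha\to\Sph^2(1/2)$ is a Riemannian submersion whose base has curvature $4$, and $X$ is the horizontal lift of the unit tangent $\gamma'$. By O'Neill's formulas the horizontal part of $\nabla_XX$ projects to $\nabla^{\Sph^2}_{\gamma'}\gamma'$. The vertical part $\tfrac12[X,X]^v=0$ has no normal component, and $N$ is horizontal, lifting the base normal of $\gamma$. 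Therefore $\langle\nabla_XX,N\rangle=k_g$, and $2H=\langle\nabla_\xi\xi,N\rangle+\langle\nabla_XX,N\rangle=k_g$, up to orientation choices. Note that $\alpha$ only rescales the fibres, so it does not enter this computation.
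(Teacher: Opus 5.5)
Your proposal is correct and follows the paper's route: $\nu=0$ makes $\xi$ tangent, the image is saturated by Hopf fibres, the quotient is a closed immersed curve $\gamma$ with image $\pi^{-1}(\gamma)$, and $2H=k_g$ comes from tracing $\sigma$ in the frame $\{\xi,X\}$. You use $\nabla_\xi\xi=0$ and O'Neill's formula for $\langle\nabla_XX,N\rangle$, which is more detail than the paper gives. One small correction: your suggested transversal, a closed orbit of the orthogonal (horizontal) line field, does not exist in general, because the holonomy of the Hopf fibration usually prevents horizontal curves on a Hopf torus from closing. You do not need it, though, since the periodicity of the flow of $\xi^\top$ from Step~1 already shows that $\Sigma/\!\sim$ is a circle.
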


\begin{proof}
The identity $\nu=\langle N,\xi\rangle=0$ says that $\xi$ is tangent to the
surface.  Its integral curves are the Hopf fibers, so the image is saturated
by fibers.  The quotient is therefore a closed immersed curve $\gamma$ in
the base and the image is $\pi^{-1}(\gamma)$.  Computing the trace of the
second fundamental form in the orthonormal frame formed by the vertical
direction and the horizontal lift of the tangent to $\gamma$ gives
\eqref{eq:Hopf-H}.  Compactness implies that the immersion factors
through a finite covering of this Hopf torus.
\end{proof}

In the following result, we establish a rigidity theorem for the critical Berger metric.

\begin{theorem}\label{thm:rigidity}
Let
\(
 \varphi:\Sigma^2\longrightarrow\Sph^3_{1/3}
\)
be a closed, connected, two-sided immersion with $\chi(\Sigma)\leq0$.
Then
\[
 \lambda_2(\mathcal J)\leq0.
\]
If equality holds, then, up to an ambient isometry, the image of
$\varphi$ is the Clifford torus and $\varphi$ factors through a finite
covering of its standard embedding.  Conversely, the standard
Clifford-torus embedding {realizes} equality.  Consequently, in the embedded
category equality holds if and only if the surface is congruent to the
Clifford torus.
\end{theorem}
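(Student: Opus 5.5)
The inequality $\lambda_2(\mathcal J)\le0$ is the case $\alpha=1/3$ of Corollary~\ref{cor:sign}. At $\alpha=1/3$ the constant $\frac{(1+\alpha)(1-3\alpha)}{2\alpha}$ vanishes, so \eqref{eq:main} reads
\[
 \lambda_2(\mathcal J)\Area(\Sigma)\le -2\int_\Sigma H^2\,dA+4\pi\chi(\Sigma)-\frac{2}{3}\int_\Sigma(2-\nu^2)\nu^2\,dA,
\]
using $(1-\alpha)^2/(2\alpha)=2/3$. Each term on the right is nonpositive when $\chi(\Sigma)\le0$.

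Suppose now that $\lambda_2(\mathcal J)=0$. Then every term must vanish:
\begin{itemize}
 \item $\int H^2=0$, so $H\equiv0$;
 \item $\chi(\Sigma)=0$;
 \item $(2-\nu^2)\nu^2\equiv0$, and since $\nu^2\le1<2$, this gives $\nu\equiv0$.
\end{itemize}
By Lemma~\ref{lem:Hopf-surface}, the image is a Hopf torus $\pi^{-1}(\gamma)$ with $k_g=2H=0$. So $\gamma$ is a closed geodesic of the base sphere, i.e.\ a great circle.

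Any two great circles are related by a rotation of $\Sph^2$. Such a rotation lifts to an isometry of $\Sph^3_\alpha$ preserving the fibration: the $U(2)$ action commutes with the Hopf field $V$, so it preserves $g_\alpha$, and it covers $SO(3)$ on the base. Hence $\pi^{-1}(\gamma)$ is congruent to $\pi^{-1}$ of the equator, which is the Clifford torus $\mathcal T$ (with $|z|=|w|$ mapping to the equator under the standard Hopf map).

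It remains to see that $\varphi$ factors through a finite covering. We know $\varphi$ is an immersion of a closed surface onto the embedded torus $\mathcal T$. An immersion between closed surfaces of the same dimension is a local diffeomorphism. Since $\Sigma$ is compact, it is a proper local diffeomorphism, hence a finite covering $\Sigma\to\mathcal T$. This is the step needing the most care. One must make sure the image being $\pi^{-1}(\gamma)$ really means $\varphi$ maps into the embedded surface $\mathcal T$ and not merely onto it as a set with branches. Since $\mathcal T$ is an embedded submanifold and $\varphi(\Sigma)\subset\mathcal T$, $\varphi$ is smooth into $\mathcal T$, and the argument goes through.

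For the converse, \eqref{eq:model-lambda2} shows that the standard embedding of $\mathcal T$ in $\Sph^3_{1/3}$ has $\lambda_2(\mathcal J_{\mathcal T})=0$. This relies on the spectrum computed from \eqref{eq:flat-spectrum} with $\mathcal J_\mathcal T=\Delta+4$: at $\alpha=1/3$ both $\mu_{1,-1}=4$ and $\mu_{1,0}=1+3=4$ give eigenvalue $0$.

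In the embedded category, a finite covering that is an embedding must be one-sheeted, so $\varphi$ is congruent to the standard embedding. This gives the stated equivalence.
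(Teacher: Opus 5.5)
Your proposal is correct and follows the paper's proof essentially step by step. You specialize \eqref{eq:main} at $\alpha=1/3$, force $H\equiv0$, $\chi(\Sigma)=0$ and $\nu\equiv0$ in the equality case, apply Lemma~\ref{lem:Hopf-surface} to get a Hopf torus over a great circle, and cite \eqref{eq:model-lambda2} for the converse; the details you add beyond the paper's sketch ($\nu^2\le1$ forcing $\nu\equiv0$, lifting base rotations through $U(2)$ to isometries of $\Sph^3_\alpha$, and the proper-local-diffeomorphism argument for the finite covering) are all sound.
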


\begin{proof}
At $\alpha=1/3$, estimate \eqref{eq:main} becomes
\begin{equation*}\label{eq:critical}
 \lambda_2(\mathcal J)\leq
 -\frac{2}{\Area(\Sigma)}\int_\Sigma H^2\mskip1mu dA
 +\frac{4\pi\chi(\Sigma)}{\Area(\Sigma)}
 -\frac{2}{3\Area(\Sigma)}
 \int_\Sigma(2-\nu^2)\nu^2\mskip1mu dA.
\end{equation*}
Every term on the right-hand side is nonpositive.  If
$\lambda_2(\mathcal J)=0$, then the right-hand side must vanish.  Hence
\[
 H\equiv0,\qquad \chi(\Sigma)=0,\qquad \nu\equiv0.
\]
By Lemma~\ref{lem:Hopf-surface}, the image is the inverse image of a closed
curve $\gamma$ in the base and $2H=k_g$.  Therefore $k_g=0$, so $\gamma$
is a geodesic.  The isometry group of the base acts transitively on its
geodesics, and the inverse image of any such geodesic is congruent to the
Clifford torus.  The covering statement follows from the final assertion of
Lemma~\ref{lem:Hopf-surface}.

Conversely, \eqref{eq:model-lambda2} shows that the standard Clifford torus
in $\Sph^3_{1/3}$ satisfies $\lambda_2(\mathcal J)=0$.  If $\varphi$ is {an embedding},
the covering obtained above has degree one, which completes the equality
characterization in the embedded category.  
\end{proof}

\begin{remark}
For a CMC surface, weak stability means that the quadratic form associated
with $-\mathcal J$ is nonnegative on functions of integral zero.  This is closely
related to, but not identical with, the sign of $\lambda_2(\mathcal J)$, whose
min--max space is the orthogonal complement of a first eigenfunction.
Torralbo and Urbano used CMC stability and specially chosen mean-zero test
functions to classify stable examples.  Here the weighted balancing uses
the first eigenfunction itself and therefore targets $\lambda_2(\mathcal J)$
directly.  This distinction is what permits Theorems~\ref{thm:main} and
\ref{thm:rigidity} to apply without the CMC hypothesis.
\end{remark}

\section*{Funding}
The authors were partially supported by the Brazilian National Council for Scientific and Technological Development, Brazil [Grants: 402563/2023-9 and 304381/2026-8 to M.B.; 309867/2023-1 and 445723/2025-4 to A.M.], and were partially supported by Coordination for the Improvement of Higher Education Personnel [Finance code - 001].

\section*{Data Availability}
Data availability is not applicable to this article, as no data sets were generated or analyzed in the course of this research.

\section*{Conflict of Interest}
The authors declare that they have no conflict of interest related to this article.

\bibliographystyle{amsplain}
\bibliography{references.bib}

\end{document}